\newtheorem{theorem}{Theorem}[section]
\newtheorem{lemma}[theorem]{Lemma}
\newtheorem{proposition}[theorem]{Proposition}
\newtheorem{remark}[theorem]{Remark}
\title{More Examples of Non-Rational Adjoint Groups} 
\author{Nivedita Bhaskhar \\ \small{Department of Mathematics \& Computer Science, Emory University, Atlanta, GA 30322, USA.}\\ \href{mailto:nbhaskh@emory.edu}{\small{\tt nbhaskh@emory.edu}}}
\begin{document}
\maketitle

\begin{abstract}
In this paper, we give a recursive construction to produce examples of quadratic forms $q_n$ in the $n^{\mathrm{th}}$ power of the fundamental ideal in the Witt ring whose corresponding adjoint groups $\mathrm{PSO}\left(q_n\right)$ are not stably rational. Computations of the R-equivalence classes of adjoint classical groups by Merkurjev are used to show that these groups are not R-trivial. This extends earlier results of Merkurjev and Gille where the forms considered have non-trivial and trivial discriminants respectively.
\end{abstract}

\section{Introduction}
Rationality of varieties of connected linear algebraic groups has been a topic of great interest. It is well known that in characteristic $0$ every such group variety is rational over the algebraic closure of its field of definition. However, rationality over the field of definition itself is a more delicate question and examples of Chevalley and Serre of non-rational tori and semisimple groups respectively indicate its subtle nature. Platonov's famous example of non-rational groups of the form $\mathrm{SL}_1(D)$ settled negatively the long standing question of whether simply-connected almost simple $k$ groups were rational over $k$, shifting the focus to adjoint groups.

Platonov himself conjectured (\cite{PLR}, pg 426) that adjoint simple algebraic $k$-groups were rational over any infinite field. Some evidence of the veracity of this conjecture is found in \cite{CH} where Chernousov establishes that $\mathrm{PSO}(q)$ is a stably rational $k$ variety for the special quadratic form $q=\langle 1,1,\ldots 1\rangle$ where $k$ is any infinite field of characteristic not $2$. Note that the signed discriminant of the quadratic form in question is $\pm 1$.

However Merkurjev in \cite{ME} constructs a quadratic form $q$ of dimension $6$ with non-trivial signed discriminant over a base field $k$ of characteristic not $2$ and cohomological dimension $2$ such that $\mathrm{PSO}(q)$ is not a $k$-stably rational variety. This example is obtained as a consequence of his computations of \textit{R-equivalence classes} of adjoint classical groups which relates to stable-rationality via the following elementary result :

\textit{If $X$ is a $k$-stably rational variety, then $X(K)/R$ is trivial for any extension $K/k$.}

In fact, Merkurjev shows that if $q$ is a quadratic form of dimension $\leq 6$, then $\mathrm{PSO}(q)(K)/R$ is not trivial for some extension $K/k$ if and only if $q$ is a \textit{virtual Albert form}. Bruno Kahn and Sujatha give a cohomological description of $\mathrm{PSO}(q)(k)/R$ (\cite{BS}, Thm 4) for any virtual Albert form $q$ over fields of characteristic $0$.

It is to be noted that Merkurjev's example uses the non-triviality of the signed discriminant of the quadratic form in a crucial way.

Let $W(k)$ denote the Witt ring of quadratic forms defined over $k$ and $I(k)$ denote the fundamental ideal of even dimensional forms. One can ask if there are examples of quadratic forms $q_n$ defined over fields $k_n$ satisfying the following two properties : 
\begin{enumerate}
\item
$q_n\in I^n(k_n)$, the $n$-th power of the fundamental ideal.
\item
$\mathrm{PSO}(q_n)$ is not $k_n$-stably rational.
\end{enumerate}

Gille answers the question for $n=2$ very precisely in \cite{GI} and produces a quadratic form of dimension 8 with trivial discriminant over a field of cohomological dimension 3. He also shows that the dimension of the quadratic form has to be at least 8 and that the base field should have cohomological dimension at least 2. 

This paper  produces pairs $(k_n,q_n)$ for every $n$ in a recursive fashion such that \\ $\mathrm{PSO}(q_n)(k_n)/R \neq \{1\}$. This implies that $\mathrm{PSO}(q_n)$ is not  stably rational. 

\section{Notations and Conventions}
All fields considered are assumed to have characteristic $0$. Let $W(k)$ denote the Witt ring of quadratic forms defined over $k$ and $I(k)$, the fundamental ideal of even dimensional forms. $P_n(k)$ is the set of isomorphism classes of anisotropic $n$-fold Pfister forms and $I^n(k)$ denotes the $n$-th power of the fundamental ideal. Let us fix the convention that $\langle \langle a \rangle \rangle$ denotes the 1-fold Pfister form $\langle 1,a \rangle$. A generalized Pfister form is any scalar multiple of a Pfister form.

\section{A formula for R-equivalence classes}
Let $G$ be a connected linear algebraic group over $k$. The following relation defined on $G(k)$ is an equivalence relation, called the \textit{R-equivalence} relation.

\[g_0 \sim g_1 \iff \exists \phantom{.}  g(t)\in G(k(t)),\phantom{.} g(0)=g_0 ,\phantom{.} g(1)=g_1 \]

 The induced equivalence classes are called \textit{R-equivalence classes} of $G(k)$.

An  algebraic variety $X$  over $k$ is said to be $k$-stably rational if there exist two affine spaces $\mathbb{A}^n_k, \mathbb{A}^m_k$ and a birational map defined over $k$ between $\mathbb{A}^n_k \times_k X$ and $\mathbb{A}^m_k$.

A $k$ algebraic group $G$ is said to be \textit{R-trivial} if $G(K)/R = \{1\}$ for every extension $K$ of $k$.

Recall the fact that stably $k$ rational varieties are R-trivial. The following formula, a special case of Merkurjev's computations of R-equivalence classes of classical adjoint groups, is a key ingredient for our construction
of  nonrational adjoint groups.

\begin{theorem}[Merkurjev, \cite{ME}, Thm 1]
\label{Reqformula}$\mathrm{PSO}(q)(k)/R \cong G(q)/\mathrm{Hyp}(q)k^{\times 2}$  where 
\begin{enumerate}
\item
$q$ is an even dimensional (say of dimension $2b$) non-degenerate quadratic form over $k$ of characteristic $\neq 2$.
\item
$G(q) = \{a\in k^{\times}|\phantom{.} aq\cong q \}$, the group of similarities.
\item
$\mathrm{Hyp}(q) = \langle N_{l/k}(l^{\times})|\phantom{.}[l:k]< \infty, q_l \cong \mathbb{H}^{b} \rangle $. 
\item
$k^{\times 2}:= \{a^2|\phantom{.}a\in k^{\times}\}$.
\end{enumerate} 
\end{theorem}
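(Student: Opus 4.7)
The plan is to construct a natural homomorphism $\mathrm{PSO}(q)(k) \to G(q)/k^{\times 2}$ via the multiplier map on similitudes, show it factors through R-equivalence, and identify its kernel modulo R-equivalence as $\mathrm{Hyp}(q) k^{\times 2}/k^{\times 2}$. I would start with the central short exact sequence
$$1 \to \mathbb{G}_m \to \mathrm{GO}^+(q) \to \mathrm{PSO}(q) \to 1$$
realizing $\mathrm{PSO}(q)$ as the quotient of the group of proper similitudes by scalars. Hilbert 90 ($H^1(k,\mathbb{G}_m)=0$) guarantees that every $g \in \mathrm{PSO}(q)(k)$ lifts to a proper similitude $\tilde g$, unique up to central $k^\times$. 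Since scalars have square multiplier, $\mu(\tilde g) \in k^\times$ is well defined modulo $k^{\times 2}$; and since $\dim q$ is even, any $a \in G(q)$ is realized as a multiplier of a proper similitude (adjusting by a hyperplane reflection if necessary without altering the multiplier), yielding a surjective homomorphism $\bar\mu: \mathrm{PSO}(q)(k) \twoheadrightarrow G(q)/k^{\times 2}$. Repeating the Hilbert 90 argument over $k(t)$, an R-equivalence $g(t) \in \mathrm{PSO}(q)(k(t))$ lifts to $\tilde g(t) \in \mathrm{GO}^+(q)(k(t))$ whose multiplier is a rational function $f(t) \in k(t)^\times$, and specialization at $t=0,1$ shows $\bar\mu$ factors through $\mathrm{PSO}(q)(k)/R$.

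Next I would show that the classes in $\mathrm{Hyp}(q)$ correspond to R-trivial elements, inducing a surjection $\Phi: \mathrm{PSO}(q)(k)/R \twoheadrightarrow G(q)/\mathrm{Hyp}(q) k^{\times 2}$. The key observation is that whenever $q_l$ is hyperbolic over a finite extension $l/k$, the group $\mathrm{PSO}(q_l)$ is split, hence $l$-rational, and in particular R-trivial. A norm-principle type argument combined with the Weil restriction $R_{l/k}\mathrm{PSO}(q_l)$ then exhibits an R-trivial element of $\mathrm{PSO}(q)(k)$ whose multiplier is the prescribed norm $N_{l/k}(\lambda)$, thereby accounting for every generator of $\mathrm{Hyp}(q)$.

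The heart of the proof, and where the bulk of the work lies, is proving injectivity of $\Phi$. Given $g \in \mathrm{PSO}(q)(k)$ whose multiplier can be written as $c^2 \prod_i N_{l_i/k}(\lambda_i)$ with each $q_{l_i}$ hyperbolic, one must build an explicit R-equivalence from $g$ to the identity. The strategy is to factor a chosen lift $\tilde g$ into similitudes each ``supported'' on one extension $l_i$, and then deform each factor to a scalar along a rational curve in $\mathrm{GO}^+(q)$ that exists precisely because $q$ acquires many new similitudes once hyperbolic over $l_i$. Making this factorization precise, controlling that the resulting rational paths descend from $\mathrm{GO}^+(q)$ to $\mathrm{PSO}(q)$, and ensuring that the pieces assemble coherently without introducing extra obstructions, is the real technical content. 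I expect this to be the main obstacle, as it requires the full strength of Merkurjev's analysis of how the hyperbolic splitting behavior of $q$ controls R-equivalence on its adjoint orthogonal group.
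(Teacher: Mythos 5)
This theorem is cited verbatim from Merkurjev (\cite{ME}, Thm 1); the paper under review does not prove it, so there is no internal proof to compare against. Evaluating your sketch on its own merits, the overall frame --- lifting along $1 \to \mathbb{G}_m \to \mathrm{GO}^+(q) \to \mathrm{PSO}(q) \to 1$ via Hilbert 90, using the similitude multiplier, and invoking rationality of the split group $\mathrm{PSO}(q_l)$ when $q_l$ is hyperbolic --- is indeed the correct skeleton, and the surjectivity argument (compose with a reflection to get a proper similitude) is fine.

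There is, however, a genuine gap already in the ``easy'' direction, before you even reach the injectivity you flag as hard. You claim that specialization of the multiplier $f(t)=\mu(\tilde g(t))$ at $t=0,1$ shows $\bar\mu$ factors through $R$-equivalence as a map to $G(q)/k^{\times 2}$. This is false in general, and in fact cannot be true, since if it were the formula would read $G(q)/k^{\times 2}$ rather than $G(q)/\mathrm{Hyp}(q)k^{\times 2}$. Concretely, one may lift $g(t)$ over the semilocal ring of $\mathbb{A}^1_k$ at $\{0,1\}$ (whose Picard group vanishes), so $f$ is a unit there and $f(0),f(1)\in k^{\times}$ are defined; but writing $f = c\prod_i p_i(t)^{n_i}$ with the $p_i$ irreducible and prime to $t(t-1)$, one finds $f(0)/f(1)$ equals a product of norms $N_{k(\theta_i)/k}\left(\theta_i/(1-\theta_i)\right)^{n_i}$ over the residue fields $k(\theta_i)=k[t]/(p_i)$ --- and there is no reason for this to be a square. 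The entire point, and a substantial part of Merkurjev's argument, is to show that the $p_i$ which appear with odd exponent are precisely those over whose residue fields $q$ becomes hyperbolic, so that $f(0)/f(1)\in\mathrm{Hyp}(q)k^{\times 2}$. In other words, $\mathrm{Hyp}(q)$ is already needed to make the map well defined on $R$-equivalence classes; it does not enter merely as a later quotient. Combined with the acknowledged hand-waving on injectivity (the factorization into pieces ``supported'' on single splitting extensions and the norm-principle step for similitudes), the proposal is a plausible outline of the strategy but misstates where $\mathrm{Hyp}(q)$ first appears and leaves the technical core unaddressed.
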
 

Thus to check that $\mathrm{PSO}\left(q_n\right)$ is not $k_n$-stably rational, it is enough to check that \\  $\mathrm{PSO}\left(q_n\right)\left(k_n\right)/R\neq \{1\}$ using the above formula. 

\section{Lemmata}
This section collects a list of lemmas which come in handy whilst constructing nonrational adjoint groups.

\begin{lemma}[Odd extensions]\label{odd}Let $q$ be a quadratic form over $k$. Let $k'/k$ be an odd degree extension.  Then, 
\[\mathrm{PSO}\left(q_{k'}\right)(k')/R = \{1\} \implies \mathrm{PSO}({q})(k)/R =\{1\}.\]
\end{lemma}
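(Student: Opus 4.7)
The plan is to unpack both sides via Merkurjev's formula (Theorem \ref{Reqformula}), base-change an arbitrary similarity factor up to $k'$, use the hypothesis there, and then transfer back to $k$ using the norm $N_{k'/k}$ together with the odd-degree hypothesis.

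Concretely, I would take $a \in G(q)$ and aim to show $a \in \mathrm{Hyp}(q)\, k^{\times 2}$; by Theorem \ref{Reqformula} this is equivalent to triviality of $\mathrm{PSO}(q)(k)/R$. Since similarity is preserved under base change, $a \in G(q_{k'})$, and by hypothesis applied through Theorem \ref{Reqformula} on $k'$, one can write $a = u \cdot b^2$ with $b \in (k')^\times$ and $u \in \mathrm{Hyp}(q_{k'})$, so $u$ is a product of norms of the form $N_{l/k'}(\alpha_l)$ for finite extensions $l/k'$ with $q_l$ hyperbolic.

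Next I would apply $N_{k'/k}$ to this equation. Three ingredients feed in: (i) transitivity of norms, $N_{k'/k}\circ N_{l/k'} = N_{l/k}$, together with the fact that each such $l$ is a finite extension of $k$ over which $q$ is still hyperbolic, so $N_{k'/k}(u) \in \mathrm{Hyp}(q)$; (ii) $N_{k'/k}(b^2) = N_{k'/k}(b)^2 \in k^{\times 2}$; and (iii) since $a \in k^\times$ and $[k':k] = 2m+1$ is odd, $N_{k'/k}(a) = a^{2m+1} = a\cdot (a^m)^2$. Putting these together yields $a\cdot (a^m)^2 \in \mathrm{Hyp}(q)\cdot k^{\times 2}$, whence $a \in \mathrm{Hyp}(q)\, k^{\times 2}$, which is exactly what we need.

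The statement is really a formal consequence of the shape of Merkurjev's formula, so there is no deep obstacle; the only thing to handle with care is the bookkeeping that ensures $\mathrm{Hyp}(q_{k'})$ maps into $\mathrm{Hyp}(q)$ under $N_{k'/k}$ (which uses transitivity of norms and the stability of hyperbolicity under further base change) and the standard observation that odd powers of an element of $k^\times$ differ from the element by a square in $k^\times$. Everything else is a direct application of Theorem \ref{Reqformula}.
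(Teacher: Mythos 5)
Your argument is essentially identical to the paper's proof: both take a similarity factor $a\in G(q)$, push it up to $k'$ where the hypothesis and Merkurjev's formula put it in $\mathrm{Hyp}(q_{k'})k'^{\times 2}$, apply $N_{k'/k}$ using norm transitivity to land in $\mathrm{Hyp}(q)k^{\times 2}$, and then use that $N_{k'/k}(a)=a^{2m+1}$ differs from $a$ by a square. The bookkeeping you flag (that $N_{k'/k}$ carries $\mathrm{Hyp}(q_{k'})$ into $\mathrm{Hyp}(q)$) is exactly the step the paper also records, so there is no gap.
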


\begin{proof}
Suppose that $x\in G(q)$.

Clearly $G(q)\subseteq G\left(q_{k'}\right)=\mathrm{Hyp}\left(q_{k'}\right)k'^{\times 2}$ as $\mathrm{PSO}\left({q}_{k'}\right)(k')/R=\{1\}$. 

The definition of $\mathrm{Hyp}$ groups and the transitivity of norms immediately yield the fact that $N_{k'/k}\left(\mathrm{Hyp}\left(q_{k'} \right)k'^{\times 2}\right)\subseteq \mathrm{Hyp}({q})k^{\times 2}$.

If $2n+1$ is the degree of $k'$ over $k$, it follows that  $x^{2n+1}=N_{k'/k}(x)\in \mathrm{Hyp}({q})k^{\times 2}$. Hence $x\in \mathrm{Hyp}({q})k^{\times 2}$.
\end{proof}

Let $p$ be a Pfister form over $k$. Its \textit{pure-subform} $\tilde{p}$ is defined uniquely upto isometry via the property that $\tilde{p}\perp \langle 1\rangle \cong p$. The following useful result connects the values of pure-subforms and Pfister forms : 
  
\begin{lemma}[\cite{SC}, Chap 4, Thm 1.4]
\label{puresubform}
If $D(q)$ denotes  the set of non-zero values represented by the quadratic form $q$, then, for $p\in P_n(k)$, 

\[b\in D(\tilde{p}) \iff p \cong \langle \langle b, b_2, \ldots, b_n \rangle\rangle, \]

for some $b_2, \ldots, b_n\in k^{\times}$. 
\end{lemma}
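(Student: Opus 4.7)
My plan is to treat the two directions separately, with induction on $n$ for the harder one.

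The $(\Leftarrow)$ direction is essentially a direct expansion. Assuming $p \cong \langle\langle b, b_2, \ldots, b_n \rangle\rangle$, I would write $p = \langle 1, b\rangle \otimes r$ with $r = \langle\langle b_2, \ldots, b_n\rangle\rangle \cong \langle 1\rangle \perp \tilde{r}$, and expand to get $p \cong \langle 1\rangle \perp \tilde{r} \perp \langle b\rangle \perp b\tilde{r}$. Uniqueness of the pure subform then identifies $\tilde{p} \cong \tilde{r} \perp \langle b\rangle \perp b\tilde{r}$, which plainly represents $b$.

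For $(\Rightarrow)$ I would proceed by induction on $n$. The base case $n = 1$ is immediate: $\tilde{p} = \langle a\rangle$, so $b \in D(\tilde{p})$ forces $a \equiv b \pmod{k^{\times 2}}$, giving $p \cong \langle\langle b\rangle\rangle$. For $n \geq 2$, factor $p = \langle\langle a_1\rangle\rangle \otimes q$ with $q \in P_{n-1}(k)$, so $\tilde{p} \cong \tilde{q} \perp a_1 q$. Any $b \in D(\tilde{p})$ then decomposes as $b = s + a_1 t$, where $s$ is zero or in $D(\tilde{q})$ and $t$ is zero or in $D(q)$. If $t = 0$, the inductive hypothesis applied to $q$ yields $q \cong \langle\langle b, c_3, \ldots, c_n\rangle\rangle$ and hence $p \cong \langle\langle b, a_1, c_3, \ldots, c_n\rangle\rangle$. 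If $s = 0$, the Pfister similarity property $tq \cong q$ gives $p = q \perp a_1 q \cong q \perp a_1 t q = q \perp b q = \langle\langle b\rangle\rangle \otimes q$.

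The main obstacle is the genuinely mixed case where both $s, t \neq 0$; here I would combine the two reductions above. Applying induction to $q$ with the slot $s$, write $q \cong \langle\langle s\rangle\rangle \otimes r$ for some $r \in P_{n-2}(k)$. Simultaneously, $tq \cong q$ yields $p \cong q \perp a_1 t q = \langle\langle a_1 t\rangle\rangle \otimes q \cong \langle\langle a_1 t, s\rangle\rangle \otimes r$. It then suffices to rearrange the $2$-fold Pfister form $\pi := \langle\langle a_1 t, s\rangle\rangle$ so that $b$ appears as one of its slot entries. The binary form $\langle s, a_1 t\rangle \subset \tilde{\pi}$ represents $b = s + a_1 t$, and by the standard classification of binary forms it must therefore be isometric to $\langle b, b s a_1 t\rangle$ (same dimension, same determinant up to squares, common represented value). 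Substituting, $\pi \cong \langle 1, b, b s a_1 t, s a_1 t\rangle = \langle\langle b, s a_1 t\rangle\rangle$, so that $p \cong \langle\langle b, s a_1 t\rangle\rangle \otimes r$ exhibits $b$ as the desired slot entry. The only real calculation in the whole argument is this $2$-fold Pfister manipulation; everything else is bookkeeping around the decomposition $\tilde{p} \cong \tilde{q} \perp a_1 q$ and the Pfister similarity property.
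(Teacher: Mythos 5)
The paper does not prove this lemma; it simply cites Scharlau (\cite{SC}, Ch.\ 4, Thm.\ 1.4), so there is no internal proof to compare against. Your argument is correct and is, in substance, the standard proof of the Pure Subform Theorem found in Scharlau/Lam: factor $p \cong \langle\langle a_1\rangle\rangle \otimes q$, observe $\tilde p \cong \tilde q \perp a_1 q$, use the subform-value lemma to split $b = s + a_1 t$ with $s \in D(\tilde q)\cup\{0\}$, $t \in D(q)\cup\{0\}$, and close the three cases via roundness of Pfister forms, the inductive hypothesis applied to $q$, and the two-dimensional chain-equivalence step $\langle s, a_1 t\rangle \cong \langle b, b s a_1 t\rangle$.
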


The next lemma is a useful tool for converting an element which is a norm from two different quadratic extensions into a norm from a biquadratic extension of the base field upto squares.

\begin{lemma}[Biquadratic-norm trick, \cite{KLST}, Lemma 1.4]
\label{biquadratic}
If $l_1$ and $l_2$ are two quadratic extensions of a field $l$, then

\[N_{l_1/l}\left(l_1^{\times}\right)\cap N_{l_2/l}\left(l_2^{\times}\right)=N_{l_1\otimes_l l_2/l}\left(\left(l_1\otimes_l l_2\right)^{\times}\right)l^{\times 2}.\]
\end{lemma}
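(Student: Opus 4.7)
The plan is to prove both inclusions separately. The easy containment $\supseteq$ follows from the transitivity of norms: whether $l_1 \otimes_l l_2$ is a biquadratic field or the split \'etale algebra $l_1 \times l_1$, the norm $N_{l_1 \otimes_l l_2/l}$ factors through each $N_{l_i/l}$ (on the appropriate $l_i$-factor), and $l^{\times 2} = N_{l_i/l}(l^\times) \subseteq N_{l_i/l}(l_i^\times)$. Together these give the right-to-left inclusion.

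For the harder direction $\subseteq$, I would split on whether $l_1 \cong l_2$. If they are isomorphic then $l_1 \otimes_l l_2 \cong l_1 \times l_1$, and both sides of the equality reduce to $N_{l_1/l}(l_1^\times)$, so there is nothing to prove. Otherwise $L := l_1 \otimes_l l_2$ is a biquadratic field, say $L = l(\sqrt{a}, \sqrt{b})$, with $\mathrm{Gal}(L/l) = \langle \sigma, \tau \rangle$ where $\sigma$ fixes $l_2$ pointwise and $\tau$ fixes $l_1$ pointwise.

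In the biquadratic case, suppose $c = N_{l_1/l}(\alpha) = N_{l_2/l}(\beta)$ with $\alpha \in l_1^\times$ and $\beta \in l_2^\times$. The key move is to consider $\xi := \alpha - \beta \in L$ (the degenerate case $\alpha = \beta$ forces $\alpha \in l_1 \cap l_2 = l$ and so $c = \alpha^2 \in l^{\times 2}$ trivially). Using $\sigma(\alpha) = c/\alpha$, $\tau(\beta) = c/\beta$, $\sigma(\beta) = \beta$, and $\tau(\alpha) = \alpha$, I would compute the four Galois conjugates of $\xi$ and expand $N_{L/l}(\xi) = \xi\,\sigma(\xi)\,\tau(\xi)\,\sigma\tau(\xi)$ directly.

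The main step is to verify that this expansion collapses to $c \cdot e^2$ for an element $e \in L^\times$ that is in fact Galois-invariant and hence lies in $l^\times$. A short calculation produces $e = (\alpha - \beta)(c - \alpha\beta)/(\alpha\beta)$, and one checks by direct substitution that $\sigma(e) = \tau(e) = e$, so $e \in L^{\mathrm{Gal}(L/l)} = l$. The remaining degenerate case $e = 0$ corresponds to $\alpha\beta = c$, which forces $\beta = \sigma(\alpha) \in l_1 \cap l_2 = l$ and again $c = \beta^2 \in l^{\times 2}$. I expect the main obstacle to be guessing the candidate $\xi = \alpha - \beta$: the $l$-rationality of the square factor $e$ is a symmetric cancellation that is not obvious a priori but becomes forced once one writes out the Galois action on $\xi$.
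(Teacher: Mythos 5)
Your proof is correct, and it is worth noting that the paper itself gives no proof of this lemma---it is cited directly from Knus--Lam--Schapiro--Tignol---so there is no ``paper's approach'' to compare against. Your argument is a clean, self-contained Galois-theoretic computation. Both the easy inclusion (via transitivity of norms, handling the split case $l_1\cong l_2$ separately so that $l_1\otimes_l l_2\cong l_1\times l_1$) and the hard inclusion check out. In the biquadratic case the key computation is correct: with $c=\alpha\sigma(\alpha)=\beta\tau(\beta)$ and $\xi=\alpha-\beta$, one gets
\[
\xi\,\sigma\tau(\xi) = -\frac{c(\alpha-\beta)^2}{\alpha\beta},\qquad
\sigma(\xi)\,\tau(\xi) = -\frac{(c-\alpha\beta)^2}{\alpha\beta},
\]
so $N_{L/l}(\xi)=c\,e^2$ with $e=(\alpha-\beta)(c-\alpha\beta)/(\alpha\beta)$, and a direct substitution confirms $\sigma(e)=\tau(e)=e$, hence $e\in l$. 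Your treatment of the two degenerate cases ($\xi=0$ giving $\alpha\in l_1\cap l_2=l$, and $e=0$ with $\xi\neq 0$ giving $\beta=\sigma(\alpha)\in l_1\cap l_2=l$) is complete, and in both cases $c$ is a square in $l$. This is a perfectly good elementary proof of the cited lemma.
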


\begin{lemma}[Folklore]\label{folklore}
Let $k(u)$ be a finite separable extension of $k$ generated by $u$ of degree $p^gh$ where $p$ is a prime not dividing $h$ and $g\geq 1$. Then there exist finite separable extensions $M_1/M_2/k$ such that the following conditions hold : 
\begin{enumerate}
\item
$k(u)\subset M_1$ and $M_1 = M_2(u)$.
\item
$[M_1 : M_2]=p$ and  $p\not |\phantom{.} [M_1:k(u)]$
\end{enumerate}
\end{lemma}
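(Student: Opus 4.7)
The plan is to pass to the Galois closure and translate the statement into a purely group-theoretic assertion about Sylow $p$-subgroups. Let $L$ denote the Galois closure of $k(u)/k$ inside a fixed algebraic closure, put $G := \mathrm{Gal}(L/k)$, and let $H \leq G$ be the subgroup fixing $k(u)$, so that $[G:H] = [k(u):k] = p^g h$. Fixing a Sylow $p$-subgroup $P$ of $G$, set $K := L^P$; then $[K:k] = [G:P]$ is coprime to $p$, and the desired intermediate fields will be produced between $K$ and $L$ via the Galois correspondence.

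The candidate for $M_1$ will be $K(u) = L^{P \cap H}$ (the compositum $K \cdot k(u)$ corresponds to the intersection of the two subgroups). The first key claim is that $[K(u):K] = p^g$ exactly. It is at most $p^g$ because $|PH|/|H|$ is a $p$-power dividing $[G:H] = p^g h$. Equality follows from the observation that $K \cap k(u) \subseteq k(u)$ has degree over $k$ dividing the $p$-coprime integer $[K:k]$, so $[k(u) : K \cap k(u)] = [K(u):K]$ must absorb the full $p$-part $p^g$ of $[k(u):k]$.

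Next, the candidate for $M_2$ comes from a standard $p$-group fact: since $P \cap H$ has index $p^g \geq p$ in the $p$-group $P$, there is a subgroup $Q$ with $P \cap H \subsetneq Q \subseteq P$ and $[Q : P \cap H] = p$ (a proper subgroup of a $p$-group is strictly contained in its normalizer, so one iterates). Setting $M_2 := L^Q$, one has $Q \cap H = P \cap H$ since $Q \subseteq P$, which yields $M_2 \cdot k(u) = L^{Q \cap H} = L^{P \cap H} = M_1$. This gives $M_1 = M_2(u)$ and $[M_1 : M_2] = p$, while $k(u) \subset M_1$ is immediate from $P \cap H \subseteq H$. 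Finally, $[M_1 : k(u)] = [H : P \cap H]$, and a direct order count using $|P \cap H| = |P||H|/|PH|$ together with $|PH|/|H| = p^g$ shows $|P \cap H|$ equals the full $p$-part of $|H|$, so $p \nmid [M_1 : k(u)]$.

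The only step that genuinely requires thought is pinning down the equality $[K(u):K] = p^g$; everything else is routine bookkeeping via the Galois correspondence combined with the standard structural fact that subgroups of a $p$-group admit chains of index-$p$ supergroups. An alternative approach via completion at a prime of $k(u)$ with residue characteristic different from $p$ would also work, but the Sylow route is more uniform.
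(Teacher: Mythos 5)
Your proof has a gap at the claim $[K(u):K]=p^g$. You fix an \emph{arbitrary} Sylow $p$-subgroup $P$ of $G$, but $[K(u):K]=[P:P\cap H]$, and this equals $p^g$ precisely when $P\cap H$ is a Sylow $p$-subgroup of $H$ --- which is not automatic for an arbitrary $P$. For a concrete failure, take $G=S_4$, $p=2$, $H=\langle(12),(34)\rangle$ (so $[G:H]=6$ and $g=1$), and $P=\langle(1234),(13)\rangle$: then $P\cap H=\{e,(12)(34)\}$ has order $2$, so $[P:P\cap H]=4\neq 2=p^g$. Your supporting assertion that ``$|PH|/|H|$ is a $p$-power dividing $[G:H]$'' also fails here ($|PH|/|H|=4\nmid 6$; note $PH$ need not be a subgroup), and the equality $[K(u):K]=[k(u):K\cap k(u)]$ you invoke is false as well: in this example $\langle P,H\rangle=G$, so $K\cap k(u)=k$ and $[k(u):K\cap k(u)]=6\neq 4$. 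In general one only has the inequality $[K(u):K]\leq[k(u):K\cap k(u)]$.

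The fix is to choose $P$ compatibly with $H$: first pick a Sylow $p$-subgroup $S$ of $H$, then extend $S$ to a Sylow $p$-subgroup $P$ of $G$; since $P\cap H$ is then a $p$-subgroup of $H$ containing the Sylow subgroup $S$, it equals $S$, and $[P:P\cap H]$ is exactly the $p$-part $p^g$ of $[G:H]$. This is precisely what the paper's proof does (its $S\subseteq\mathrm{Gal}(M/k(u))$ and $T\subseteq\mathrm{Gal}(M/k)$). With that correction, the remainder of your argument --- producing $Q$ with $[Q:P\cap H]=p$ inside the $p$-group $P$, and the identification $M_1=M_2(u)$ via $Q\cap H=P\cap H$ --- is correct and matches the paper's route.
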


\begin{proof}
Let $M/k$ be any finite Galois extension containing $k(u)$ and let $S$ be any $p$-Sylow of $\mathrm{Gal}(M/k(u))$. Since $\mathrm{Gal}(M/k(u))$ is a subgroup of $\mathrm{Gal}(M/k)$, there is a $p$-Sylow subgroup $T$ of $\mathrm{Gal}(M/k)$ containing $S$. 

Let $M^S$ and $M^T$ denote the fixed fields of $S$ and $T$ in $M$ respectively. Note that $M^S \supseteq M^T$ and since $S$ and $T$ are appropriate $p$-Sylow subgroups, we have $p\not |\phantom{.} [M^S : k(u)]$ and $p\not |\phantom{.} [M^T : k]$. Comparing degrees yields $[M^S : M^T]=p^g$. Also note that $u\not\in M^T$ and $k(u)\subset M^S$.
 
In fact, $M^T(u)=M^S$ because $[M^S:M^T]$ and $[M^S:k(u)]$ are coprime. 

$S$ is a proper subgroup of its normalizer $N_T(S)$ because $T$ is nilpotent and $S\neq T$. Thus, you can find a subgroup $V$ such that $S\subseteq V\subseteq T$ and index of $S$ in $V$ is $p$. Set $M_2$ to be the fixed field of $V$ in $M$.

Thus $M_1 = M_{2}(u)$ is of degree $p$ over $M_2$ and satisfies the other conditions given in the Lemma.

\begin{center}
\begin{tikzpicture}[node distance = 2cm, auto]
\node (Q) {$k$};
\node (E) [node distance = 2cm, above of=Q, left of=Q] {$k(u)$};
\node (F) [node distance = 1cm, above of=Q, right of=Q, right of=Q] {$M^T$};
\node (G) [node distance = 1cm, above of=F ]{$M^V=M_2$};
\node (K) [above of=Q, node distance = 4cm] {$M^S=M_1$};
\node (L) [above of=K ]{$M$};
\draw[-] (Q) to node {$p^gh$} (E);
\draw[-] (Q) to node [swap] {$p\not | \phantom{.}[M^T:k]$} (F);
\draw[-] (E) to node {$p\not | \phantom{.}[M^S:k(u)]$} (K);
\draw[-] (F) to node [swap] {} (G);
\draw[-] (G) to node {$p$} (K);
\draw[-] (K) to node {} (L);
\end{tikzpicture}
\end{center}

\end{proof}

The following lemma tells us that Pfister forms yield R-trivial varieties. Note that in fact more is true, namely that $\mathrm{PSO}(q)$ is stably-rational for any generalized Pfister form $q$ (\cite{ME}, Prop 7).

\begin{lemma}\label{pfister}If $q$ is an $n$-fold Pfister form over a field $k$, then \[\mathrm{PSO}(q)(k)/R=\{1\}.\]
\end{lemma}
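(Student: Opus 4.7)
The plan is to invoke Merkurjev's formula from Theorem \ref{Reqformula}, which reduces the claim to showing $G(q) \subseteq \mathrm{Hyp}(q) k^{\times 2}$. Two structural features of Pfister forms are the starting points. First, Pfister forms are round, so $G(q) = D(q)$, the set of non-zero values represented by $q$. Second, a Pfister form is either anisotropic or hyperbolic; if $q$ is hyperbolic we may take $l = k$ as the splitting extension, giving $\mathrm{Hyp}(q) = k^\times$ and rendering the statement trivial. So the substantive case is when $q$ is anisotropic.

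In that case, I would fix $a \in D(q)$ and use the orthogonal decomposition $q \cong \langle 1 \rangle \perp \tilde q$ to write $a = c^2 + \tilde q(v)$ for some $c \in k$ and some $v$ in the underlying space of $\tilde q$. If $a$ is already a square the conclusion is immediate. Otherwise, anisotropy of $\tilde q$ (which it inherits as a subform of $q$) together with $a \notin k^{\times 2}$ forces $d := \tilde q(v)$ to be a non-zero element of $D(\tilde q)$. At this point the role of Lemma \ref{puresubform} becomes clear: since $d$ is represented by the pure subform, $q$ can be re-expressed as $\langle\langle d, b_2, \ldots, b_n \rangle\rangle$ with $d$ occupying the first Pfister slot.

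The last leg of the argument then combines a splitting calculation with a norm calculation. Setting $l := k(\sqrt{-d})$, the factor $\langle\langle d \rangle\rangle$ becomes hyperbolic over $l$, so $q_l$ is hyperbolic and $N_{l/k}(l^\times) \subseteq \mathrm{Hyp}(q)$. The direct identity $a = c^2 + d = N_{l/k}(c + \sqrt{-d})$ then exhibits $a$ as a norm from $l$, placing $a$ inside $\mathrm{Hyp}(q)$. I do not anticipate a serious obstacle; the subtlest point is simply verifying that $d \neq 0$, which is what allows one to produce a genuine quadratic extension $l$ and apply Lemma \ref{puresubform}. This combinatorial trick of pushing the pure-subform value $d$ into a Pfister slot is what delivers both the splitting extension and the norm representation of $a$ in a single stroke.
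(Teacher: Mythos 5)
Your proof is correct and follows essentially the same route as the paper: roundness of Pfister forms, Lemma \ref{puresubform} to place the pure-subform value $d$ into a Pfister slot so that $k(\sqrt{-d})$ splits $q$, and an exhibition of $a$ as a norm from $k(\sqrt{-d})$. You streamline slightly by writing $a = c^2 + d = N_{k(\sqrt{-d})/k}(c+\sqrt{-d})$ in one stroke (and the worry that $k(\sqrt{-d})$ might be trivial is dispelled by anisotropy of $q$, since $-d$ a square would force $\langle\langle d\rangle\rangle$ and hence $q$ to be hyperbolic), whereas the paper first treats the $n=1$ base case explicitly and then reduces the general case to it via the dichotomy $a \equiv b$ or $a \equiv 1+b$ modulo squares.
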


\begin{proof}

If $q$ is isotropic, it is hyperbolic and hence $G(q)=\mathrm{Hyp}(q)=k^{\times}$. Therefore assume without loss of generality that $q$ is anisotropic. 

\textbf{Case $n=1$ : }  Let $q = \langle 1,-a \rangle$. Then $G(q)= N_{k\left(\sqrt{a}\right)/k}(k\left(\sqrt{a}\right)^{\times})$. Further, $q$ splits over  a finite field extension $L$ of $k$  if and only if $a$ is a square in $L$. Therefore $q_L$ splits if and only if $L\supseteq k\left(\sqrt{a}\right) \supseteq k$ and hence clearly $\mathrm{Hyp}(q) = G(q)$.

\textbf{General case : } Recall that Pfister forms are round, that is $D(q)=G(q)$ for any Pfister form $q$. Let $\tilde{q}$ be the pure-subform of $q$. If $b\in D(\tilde{q})\subseteq D(q)$, then by Lemma \ref{puresubform}

\[b \in D (\langle 1, b \rangle) = \mathrm{Hyp}\left(\langle 1,b\rangle\right) k^{\times 2} \subseteq \mathrm{Hyp}(q) k^{\times 2}.\]

Note that any $x\in G(q)=D(q)$ can be written (upto squares from $k^{\times}$) as either $b$ or $1+b$ for some $b\in D(\tilde{q})$. Since $x=b\in D(\tilde{q})$ has just been taken care of, it is enough to note that for $b\in D(\tilde{q})$,

\[1+b\in D\left(\langle 1,b\rangle\right)\subseteq \mathrm{Hyp}(q)k^{\times 2}.\]
\end{proof}

\section{Comparison of some Hyp groups}
Let $q$ be an anisotropic quadratic form over  a field  $k$ of characteristic $0$. Let $p$ be an anisotropic Pfister form defined over $k$ and let $Q=q\perp tp$ over the field of Laurent series $K=k((t))$. Note that $K$ is a complete discrete valued field with uniformizing parameter $t$ and residue field $k$. Recall the exact sequence  in Witt groups :

\[0\to W(k)\xrightarrow{Res} W(K)\xrightarrow{\delta_{2,t}} W(k)\to 0 \]

where $Res$ is the restriction map and  $\delta_{2,t}$ denotes the second residue homomorphism
with respect to the parameter $t$.

\begin{remark}
\label{remark}
$Q$ is anisotropic and $\mathrm{dim}(Q) > \mathrm{dim}(p)$.
\end{remark}
This can be shown with the aid of the above exact sequence. Let the anisotropic part of $Q$ be $Q_{an} \cong q_1 \perp t q_2$ for quadratic forms $q_i$ defined over $k$. Then each $q_i$ is anisotropic. The following equality in $W(k)$ is in fact an isometry because the forms are anisotropic :

\[\delta_{2,t}(Q) = p = q_2.\]

This immediately implies $q\cong q_1$. The inequality between dimensions of $Q$ and $p$ follow immediately.

\begin{proposition}\label{comp2}
$\mathrm{Hyp}(Q)K^{\times 2}\subseteq \mathrm{Hyp}\left(q_K\right)K^{\times 2}$  if  $\mathrm{PSO}(q)(k)/R\neq \{1\}$.
\end{proposition}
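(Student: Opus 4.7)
The plan is to verify the containment on the defining norm generators of $\mathrm{Hyp}(Q)K^{\times 2}$. It suffices to fix a finite extension $L/K$ with $Q_L$ hyperbolic and an element $a\in L^{\times}$, and then show that $N_{L/K}(a)\in \mathrm{Hyp}(q_K)K^{\times 2}$. Since $K=k((t))$ is complete, $L$ itself is a complete discretely valued field with some finite residue extension $\ell/k$, a uniformizer $\pi$, and $t=u\pi^{e}$ for some unit $u\in L^{\times}$ and ramification index $e=e(L/K)$.

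I would then apply the Springer-style decomposition $W(L)\cong W(\ell)\oplus\langle\pi\rangle W(\ell)$ to the hyperbolic form $Q_L=q_L\perp (u\pi^{e})p_L$. The parity of $e$ splits the analysis. When $e$ is odd, one has $t\equiv \bar{u}\pi\pmod{L^{\times 2}}$ (with $\bar{u}\in\ell^{\times}$), and the two $W(\ell)$-components of $Q_L$ come out to be $q_\ell$ and $\bar{u}\,p_\ell$; their vanishing forces $q_\ell=0$ in $W(\ell)$, which lifts to $q_L=0$ in $W(L)$ via the injection $W(\ell)\hookrightarrow W(L)$ (using that $q$ descends to $k\subseteq \ell$). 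Hence $L/K$ is a splitting field of $q_K$, so $N_{L/K}(a)\in \mathrm{Hyp}(q_K)$ by definition.

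The case $e$ even is the main obstacle. Here $t$ is already a square in $L$, and hyperbolicity of $Q_L$ only yields the weaker Witt-level relation $q_\ell+\bar{u}\,p_\ell=0$ in $W(\ell)$, which in general does not make $q_L$ itself hyperbolic. The approach I would pursue is to factor $N_{L/K}$ through the tower $K\subset K(\sqrt{t})\subset L$ (noting that $K(\sqrt{t})\subseteq L$ precisely because $e$ is even), and to combine the biquadratic-norm trick (Lemma~\ref{biquadratic}) with the roundness of the Pfister form $p$ (exploited via Lemma~\ref{puresubform}) to rewrite $N_{L/K}(a)$ modulo $K^{\times 2}$ as a product of norms coming from field extensions that actually split $q_K$. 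The hypothesis $\mathrm{PSO}(q)(k)/R\neq \{1\}$ should enter at this stage, supplying a similarity of $q$ outside $\mathrm{Hyp}(q)k^{\times 2}$ to bridge the Witt equivalence $q_\ell\sim -\bar{u}\,p_\ell$ with the desired norm-containment. This even-ramification case — interweaving the tower $K\subset K(\sqrt{t})\subset L$, the Pfister structure of $p$, and the non-triviality of $\mathrm{PSO}(q)(k)/R$ — is where I expect the argument to require the most delicate handling.
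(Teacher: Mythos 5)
Your treatment of the odd ramification case is essentially correct and matches the paper: pass to the residue field, use the second residue homomorphism to kill the $t$-component, conclude that $q$ itself splits over $L$, and hence every norm from $L$ lands in $\mathrm{Hyp}(q_K)$.

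The even ramification case, however, contains two concrete errors and misses the actual mechanism. First, the claim that $K(\sqrt{t}) \subseteq L$ whenever $e$ is even is false: the ramified quadratic subextension of $L/K$ is of the form $K(\sqrt{vt})$ for some unit $v$ whose residue is determined by the Eisenstein data, and this equals $K(\sqrt{t})$ only if $v$ is a square. Second, the claim that ``$t$ is already a square in $L$'' when $e$ is even is likewise false; what is true (and what the paper establishes via Hensel's lemma and the Eisenstein polynomial of $\pi$) is that $t \equiv u^{-1} \pmod{L^{\times 2}}$ for some unit $u$ with residue $\bar u \in \ell^{\times}$, so hyperbolicity of $Q_L$ gives $q + \bar u\, p = 0$ in $W(\ell)$, with the extra factor $\bar u$ that your sketch drops. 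That unit $\bar u$ is precisely what drives the rest of the argument. Consequently the proposed route through the tower $K \subset K(\sqrt{t}) \subset L$ does not get off the ground.

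What the paper actually does in the even case is reduce entirely to residue fields: it shows $N_{L/K}(L^{\times}) \subseteq \langle N_{\ell/k}(\bar u)(-t)^{f} \rangle K^{\times 2}$ and then establishes two facts, each using the hypothesis $\mathrm{PSO}(q)(k)/R \neq \{1\}$ by a contrapositive of Lemma~\ref{odd} combined with Lemma~\ref{pfister}: the residue degree $f$ is even (killing the $(-t)^{f}$ factor), and $[k(\bar u):k]$ is even. It then invokes the group-theoretic Lemma~\ref{folklore} to manufacture a quadratic extension $M_1 = M_2(\bar u)$ of $M_2$ with $M_1/k(\bar u)$ odd, reducing the target to $N_{M_1/M_2}(\bar u) \in \mathrm{Hyp}(q_{M_2})M_2^{\times 2}$. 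Only at that point do Scharlau transfer and Frobenius reciprocity, roundness of the Pfister form $p$, Lemma~\ref{puresubform}, and the biquadratic-norm trick (Lemma~\ref{biquadratic}) enter. Your sketch names some of these tools but omits the parity arguments, the descent to $\ell \supseteq k(\bar u) \supseteq k$, and the Sylow-theoretic construction, all of which are essential; the gap is not merely one of detail but of strategy.
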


\begin{proof}
Let $L/K$ be a finite field extension which splits $Q$. There is a unique extension of the discrete valuation on $K$ to $L$ which makes $L$ into a complete discrete valued field. Let $l$ denote the residue field of $L$. Since the characteristic of $k$ is $0$, $k\subseteq K$ and $l\subseteq L$. Let $K_{nr}$ denote the maximal non-ramified extension of $K$ in $L$ and $\pi$ be a uniformizing parameter of $L$. Let $f = [l:k]$, the degree of the residue field extensions and $e$ be the ramification index of $L/K$. Let $v_{X}$ denote the corresponding valuation on fields $X=K, K_{nr}, L$ and $O_X$, the corresponding discrete valuation rings.

Since $L/K_{nr}$ is totally ramified, the minimal polynomial of $\pi$ (which is also its characteristic polynomial) over $K_{nr}$ is an \textit{Eisenstein} polynomial $x^e + a_{e-1}x^{e-1}+ \ldots + a_1 x + a_0$ in $K_{nr}[x]$, where $v_{K_{nr}}\left( a_0 \right)=1$ and $v_{K_{nr}}\left(a_i\right)\geq 1\phantom{.}\forall\phantom{.} 1\leq i\leq e-1$ (\cite{CF}, Chap 1, Sec 6, Thm 1). Note that $N_{L/K_{nr}}(\pi)=(-1)^ea_0$.

$K_{nr}=l((t))$ and $L=l((\pi))$ (\cite{SE}, Chap 2, Thm 2). Let $a_0 = -ut(1+u_1t + \ldots)$ in $O_{K_{nr}}=l[[t]]$. By Hensel's lemma, $1+u_1t + \ldots = w^2$ for some $w$ in $K_{nr}$. The relation given by the Eisenstein polynomial can be rewritten by applying Hensel's lemma again as follows :  

\[\pi^e = utv^2 , u\in l^{\times}, v\in L^{\times}.\]

Hence the norm of $\pi$ can be computed upto squares. That is,

\[N_{L/K}(\pi)= N_{K_{nr}/K}((-1)^ea_0) \in (-1)^{ef}(-t)^fN_{l/k}(u)K^{\times 2}.\]

The problem is subdivided into two cases depending on the parity of the ramification index $e$ of $L/K$. 

\subsubsection*{Case I : $e$ is odd}
We show that $L$ also splits $q$ in this case. Let $\delta_{2,\pi} : W(L)\to W(l)$ be the second Milnor residue map with respect to the uniformizing parameter $\pi$ chosen above. Note that $Q_L = q + \pi up$ in $W(L)$. Then 

 \begin{align*}
 Q_L = 0 &\implies \delta_{2,\pi}\left(Q\right)= 0 \in W(l) \\
 & \implies up=0 \in W(L) \\
 & \implies q=0 \in W(L)
\end{align*}

\subsubsection*{Case II : $e$ is even}
Now $Q_L = q+up=0$ in $ W\left(L\right)$. Since any element of $L^{\times}$ is of the form $\alpha \pi b^2$ or $\alpha b^2$ for some $\alpha \in l^{\times}$ and $b\in L$, the norm computation of $\pi$ done before yields the following :  

\[N_{L/K}\left(L^{\times}\right) \subseteq \langle N_{l/k}\left(u\right)(-t)^{f}\rangle K^{\times 2}.\]

So it is enough to show that $f$ is even and $N_{l/k}\left(u\right)$ is in $\mathrm{Hyp}\left(q\right)k^{\times 2}$.

\textbf{Claim} : $f$ is even.

If $f$ is odd, then $\mathrm{PSO}\left(q_l\right)\left(l\right)/R\neq \{1\}$ by Lemma \ref{odd}. But $q_l = -up$ is a form similar to a Pfister form. Hence by Lemma \ref{pfister}, $\mathrm{PSO}\left(q_l\right)(l)/R = \{1\}$ which is a contradiction.

\textbf{Claim} : $N_{l/k}\left(u\right)\in \mathrm{Hyp}\left(q\right)k^{\times 2}$

Look at $l\supseteq k\left(u\right)\supseteq k$. If $[l:k\left(u\right)]$ is even, then $N_{l/k}\left(u\right)=N_{k\left(u\right)/k}\left(u^{[l:k\left(u\right)]}\right)\in k^{\times 2}$ which proves the claim. 

Otherwise $r: W\left(k\left(u\right)\right)\to W\left(l\right)$ is injective and hence $q+up=0$ in $W\left(k\left(u\right)\right)$. It remains to show that $N_{k\left(u\right)/k}\left(u\right)\in \mathrm{Hyp}\left(q\right)k^{\times 2}$. 

Suppose that $[k(u):k]$ is odd. Then Lemma \ref{odd} implies that $\mathrm{PSO}\left(q_{k(u)}\right)/R \neq \{1\}$. On the other hand, $q_{k(u)}$ is similar to Pfister form $p_{k(u)}$. This contradicts Lemma \ref{pfister}. Therefore $[k(u):k]$ is even.

Let $[k(u):k]=2^g h$ where $h$ is odd and $g\geq 1$. Lemma \ref{folklore} gives us a quadratic extension $M_1=M_2(u)$ over $M_2$ such that $M_1$ is an odd extension of $k(u)$. 

Since $[M_1:k(u)]$ is odd, there is a $w\in k^{\times}$ such that 

\[N_{M_1/k}(u) = N_{k(u)/k}\left(N_{M_1/k(u)}(u)\right)= N_{k(u)/k}(u)w^2.\] 

Hence it suffices to show that $N_{M_1/k}(u)\in \mathrm{Hyp}(q)k^{\times 2}$. Using transitivity of norms and the definition of $\mathrm{Hyp}$ groups, showing $N_{M_1/M_2}\left(u\right)\in \mathrm{Hyp}\left(q_{M_2}\right){M_2}^{\times 2}$ proves the claim. 

Let $\eta:= N_{M_1/M_2}\left(u\right)$. By using Scharlau's transfer and Frobenius reciprocity (\cite{SC}, Chap 2, Lemma 5.8 and Thm 5.6),

\[ p\otimes \langle 1 \rangle  = - \langle u \rangle q \in W\left(M_1\right) \implies p\otimes \langle 1, -\eta \rangle = 0 \in W\left(M_2\right).\]

Hence $\eta \in G\left( p_{M_2}\right)=D\left(p_{M_2}\right)$ since $p$ is a Pfister form.

Let $s$ be the pure subform associated with $p_{M_2}$. We can assume (upto squares from $M_2$) that $\eta=b$ or $1+b$ for some $b\in D\left(s\right)$. In either case, $\eta\in N_{M_2\left(\sqrt{-b}\right)/M_2}\left(\left(M_2\left(\sqrt{-b}\right)\right)^{\times}\right)$. By Lemma \ref{puresubform}, $p = \langle \langle b, \ldots \rangle\rangle $. Note that if $-b$ is already a square in $M_2$, then the above reasoning shows that $q$ splits over $M_1$ which shows that $\eta \in \mathrm{Hyp}\left(q_{M_2}\right)M_2^{\times 2}$. If $-b$ is not a square, then $p$ splits in $M_2\left(\sqrt{-b}\right)$ and hence $q=-up$ splits in $M_1\left(\sqrt{-b}\right)$. 

The introduction of subfield $M_2$ is useful because the biquadratic norm trick can be used ! More precisely, since  

\[\eta \in N_{M_2\left(\sqrt{-b}\right)/M_2}\left(\left(M_2\left(\sqrt{-b}\right)\right)^{\times}\right)\cap N_{M_1/M_2}\left(M_1^{\times}\right),\]

Lemma \ref{biquadratic} shows that $\eta$ is upto squares a norm from $M_1\left(\sqrt{-b}\right)$ and $M_1\left(\sqrt{-b}\right)$ splits $q$. Thus $\eta\in \mathrm{Hyp}\left(q_{M_2}\right)M_2^{\times 2}$ as claimed.   
\end{proof}

\begin{proposition}\label{comp1} 
$\mathrm{Hyp}\left(q_K\right)K^{\times 2}\subseteq \mathrm{Hyp}\left(q\right)K^{\times 2}.$
\end{proposition}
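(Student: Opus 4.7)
The plan is to show that for every finite extension $L/K$ splitting $q_K$, the norm subgroup $N_{L/K}(L^\times)$ sits inside $\mathrm{Hyp}(q)K^{\times 2}$; since $\mathrm{Hyp}(q_K)$ is generated by such norm subgroups, the proposition then follows. Fix such an $L$ and let $l$, $\pi$, $e$, $f=[l:k]$ denote its residue field, a uniformizer, the ramification index, and the residue degree, exactly as in the setup of Proposition \ref{comp2}.

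Two preliminary observations are needed. First, the residue field $l$ itself already splits $q$: indeed, $q$ is defined over $k\subseteq l$, so $q_L$ is the image of $q_l$ under the injective restriction map $W(l)\to W(L)$ of the exact sequence recalled before Remark \ref{remark}, and hence $q_L=0$ forces $q_l=0$. Second, and this is the key new input, $f$ must be even: since $q$ is anisotropic over $k$ but $q_l$ is hyperbolic, Springer's theorem on odd degree extensions rules out $[l:k]$ being odd.

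The norm computation now goes through in the same spirit as Proposition \ref{comp2}. Because the residue characteristic is $0$, Hensel's lemma makes every principal unit of $L$ a square, so modulo $L^{\times 2}$ every element of $L^\times$ is either $u$ or $u\pi$ for some $u\in l^\times$. Transitivity of norms gives $N_{L/K}(u)=N_{l/k}(u)^e$, and the Eisenstein analysis already carried out in Proposition \ref{comp2} gives
\[
N_{L/K}(\pi) \equiv (-1)^{ef}(-t)^f N_{l/k}(u_0) \pmod{K^{\times 2}}
\]
for some $u_0\in l^\times$. Evenness of $f$ collapses both $(-1)^{ef}$ and $(-t)^f$ to squares in $K$, leaving $N_{L/K}(\pi)\equiv N_{l/k}(u_0)$ modulo $K^{\times 2}$. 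Since $l$ splits $q$, both $N_{l/k}(u)$ and $N_{l/k}(u_0)$ lie in $\mathrm{Hyp}(q)$, so every generator of $N_{L/K}(L^\times)$ modulo squares lies in $\mathrm{Hyp}(q)K^{\times 2}$, as required.

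The main obstacle is pinning down the evenness of $f$: without it, $N_{L/K}(\pi)$ would contribute a factor with odd $t$-adic valuation (namely $-t$ modulo squares) when $f$ is odd, which could not possibly lie in $\mathrm{Hyp}(q)K^{\times 2}$ since $\mathrm{Hyp}(q)\subseteq k^\times$ has trivial $t$-adic valuation. The anisotropy of $q$ over $k$, channeled through Springer's theorem, is precisely the ingredient that rules out this bad case.
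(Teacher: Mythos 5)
Your proof is correct, and it rests on the same three pillars as the paper's argument: (i) the residue field $l$ of any finite splitting extension $L/K$ already splits $q$, by the exact sequence for the second residue map; (ii) Springer's theorem then forces $f=[l:k]$ to be even because $q$ is anisotropic over $k$; and (iii) the norm of any element of $L^\times$ collapses, modulo $K^{\times 2}$, to a norm from $l/k$. The one organizational difference is in step (iii). The paper first notes that $N_{L/K}$ factors through $N_{K_{nr}/K}$, where $K_{nr}\cong l((t))$ is the maximal unramified subextension of $L/K$; since $K_{nr}$ also splits $q_K$ (because $l$ splits $q$), this shows $\mathrm{Hyp}(q_K)$ is already generated by norms from \emph{unramified} splitting extensions, and the inclusion $N_{l((t))/k((t))}\bigl(l((t))^\times\bigr)\subseteq N_{l/k}(l^\times)K^{\times 2}$ is then immediate from the evenness of $f$ with no Eisenstein polynomial in sight. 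You instead keep a possibly ramified $L$ and reuse the explicit formula $N_{L/K}(\pi)\equiv(-1)^{ef}(-t)^f N_{l/k}(u_0)\pmod{K^{\times 2}}$ already established in the proof of Proposition~\ref{comp2}, together with $N_{L/K}(u)=N_{l/k}(u)^e$ for $u\in l^\times$. Both routes are valid; the paper's reduction to the unramified case is lighter because this proposition (unlike Proposition~\ref{comp2}) never actually needs the ramified bookkeeping, while your version has the modest advantage of being a direct corollary of a computation already on the page.
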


\begin{proof}
Using the exact sequence associated to the second Milnor residue map again, it is clear that if $q$ is split by a finite field extension $L$ of $K$, then it is also split by $l$, the residue field of $L$. Thus $\mathrm{Hyp}\left(q_K\right)$ is generated by $N_{L/K}\left(L^{\times}\right)$ where $L$ runs over finite unramified extensions of $K$ which split $q$. By Springer's theorem, $[l:k]$ has to be even. And characteristic of $k = 0$ implies that $L\cong l((t))$. To conclude, it is enough to observe that 

\[N_{l((t))/k((t))}\left( l((t)) \right)^{\times} \subseteq N_{l/k}(l^{\times})K^{\times 2}.\]

\end{proof}

\section{A recursive procedure}
All fields have characteristic $0$. We say a quadruple $(n,\lambda,L,\phi)$ \textit{has property $\star$} if the following holds : 

$\phi$ is an anisotropic quadratic form over $L$ in $I^{n}\left(L\right)$ such that the scalar $\lambda$ is in $G\left(\phi\right)$ but not in $\mathrm{Hyp}\left(\phi\right){L}^{\times 2}$ and there exists a decomposition of $\phi$ into a sum of generalized $n$-fold Pfister forms in the Witt ring $ W\left(L\right)$, each of which is annihilated by $\langle 1,-\lambda \rangle$. More precisely, in $W\left(L\right)$, 
\begin{align*}
\phi = \sum_{i=1}^{m} \alpha_i p_{i,n} ,\mathrm{where}\phantom{.}\alpha_i \in {L}^{\times}&, p_{i,n}\in P_n\left(L\right) \\
\langle 1,-\lambda \rangle\otimes p_{i,n} = 0 \phantom{.}&\forall\phantom{.} i.
\end{align*}

Assume that $(n,\lambda,k_n,q_n)$ has property $\star$ with $q_n = \sum_{i=1}^{m} \alpha_i p_{i,n}$ for $p_{i,n}\in P_n\left(k_n\right)$ and $\alpha_i\in k_n^{\times}$ such that each $p_{i,n}$ is annihilated by $\langle 1, -\lambda \rangle$. Let $K_0$ denote the field $k_n$. Define the fields $K_i$ recursively as follows : \[K_i := K_{i-1}\left(\left(t_i\right)\right)\phantom{.} \forall\phantom{.} 1\leq i\leq m \]

Let $Q_0$ denote the quadratic form $q_n$ defined over $K_0$. Define the quadratic forms $Q_i$ over fields $K_i$ recursively as follows : \[ Q_i := Q_{i-1} \perp t_ip_{i,n}\phantom{.}\forall\phantom{.} 1\leq i\leq m \]

Note that $\lambda\in G\left(Q_i\right)$ for each $1\leq i \leq m$ since $\lambda \in G\left(q_n\right)$ and $G\left(p_{i,n}\right)$ for each $i$.

\begin{theorem}\label{recursive}
Let $(n,\lambda,k_n,q_n)$ has property $\star$. Then for $(K_m,Q_m)$ as above, the following hold :  
\begin{enumerate}
\item
$Q_m\in I^{n+1}\left(K_m\right)$
\item 
$\lambda \in  G\left(Q_m\right)\setminus \mathrm{Hyp}\left(Q_m\right){K_m}^{\times 2}$. In particular, $\mathrm{PSO}\left(Q_m\right)$ is not $K_m$-stably rational.
\item
$\left(n+1,\lambda,K_m,Q_m\right)$ has property $\star$. 
\end{enumerate}
\end{theorem}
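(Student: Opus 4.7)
The plan is to handle the three conclusions essentially in parallel, with the bulk of the work lying in the non-triviality in (2).

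For (1) and the decomposition in (3), I would work in $W(K_m)$ and rewrite $Q_m$. Since $Q_0 = q_n = \sum_{i=1}^m \alpha_i p_{i,n}$ in $W(k_n)$ and $Q_i = Q_{i-1} \perp t_i p_{i,n}$, a telescoping computation in $W(K_m)$ gives
\[
Q_m \;=\; \sum_{i=1}^{m}\bigl(\alpha_i p_{i,n} + t_i p_{i,n}\bigr) \;=\; \sum_{i=1}^{m}\langle \alpha_i, t_i\rangle\otimes p_{i,n} \;=\; \sum_{i=1}^{m}\alpha_i\,\langle 1, \alpha_i t_i\rangle\otimes p_{i,n},
\]
where the last equality uses $\langle \alpha_i^2 t_i\rangle = \langle t_i\rangle$. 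Each $\langle 1,\alpha_i t_i\rangle\otimes p_{i,n}$ is an $(n{+}1)$-fold Pfister form, so $Q_m$ is a sum of generalized $(n{+}1)$-fold Pfister forms and lies in $I^{n+1}(K_m)$. Since $\langle 1,-\lambda\rangle$ annihilates each $p_{i,n}$, it annihilates each summand as well, yielding the decomposition required by property $\star$ at level $n{+}1$. Also $\lambda\in G(Q_m)$: the relation $\langle 1,-\lambda\rangle p_{i,n}=0$ forces $\lambda\in G(p_{i,n})$, so $\lambda$ scales each summand $\alpha_i p_{i,n}$ and $t_i p_{i,n}$ into itself; combined with $\lambda\in G(q_n)$ this propagates up the tower.

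For (2) I would induct on $i$, showing that $\lambda\notin \mathrm{Hyp}(Q_i)K_i^{\times 2}$ for every $0\le i\le m$. The base case $i=0$ is exactly property $\star$ for $(n,\lambda,k_n,q_n)$. For the inductive step, the hypothesis $\lambda\in G(Q_{i-1})\setminus\mathrm{Hyp}(Q_{i-1})K_{i-1}^{\times 2}$ combined with Theorem~\ref{Reqformula} gives $\mathrm{PSO}(Q_{i-1})(K_{i-1})/R\neq\{1\}$. Since $Q_{i-1}$ and $p_{i,n}$ are anisotropic over $K_{i-1}$ (Pfister anisotropy and Remark~\ref{remark} propagate through each Laurent-series step), Proposition~\ref{comp2} applied with $q=Q_{i-1}$, $p=p_{i,n}$, $K=K_i$, $Q=Q_i$ gives
\[
\mathrm{Hyp}(Q_i)K_i^{\times 2}\;\subseteq\;\mathrm{Hyp}\bigl((Q_{i-1})_{K_i}\bigr)K_i^{\times 2},
\]
and Proposition~\ref{comp1} gives $\mathrm{Hyp}((Q_{i-1})_{K_i})K_i^{\times 2}\subseteq\mathrm{Hyp}(Q_{i-1})K_i^{\times 2}$. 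Thus it suffices to show $\lambda\notin\mathrm{Hyp}(Q_{i-1})K_i^{\times 2}$. But $K_i=K_{i-1}((t_i))$, so any square in $K_i$ that lies in $K_{i-1}^\times$ already lies in $K_{i-1}^{\times 2}$; hence the map $K_{i-1}^\times/K_{i-1}^{\times 2}\hookrightarrow K_i^\times/K_i^{\times 2}$ is injective on the subgroup $\mathrm{Hyp}(Q_{i-1})K_{i-1}^{\times 2}$, and the inductive hypothesis transports. At $i=m$ this gives $\lambda\notin\mathrm{Hyp}(Q_m)K_m^{\times 2}$; combined with $\lambda\in G(Q_m)$ from the previous paragraph, Theorem~\ref{Reqformula} yields $\mathrm{PSO}(Q_m)(K_m)/R\neq\{1\}$, and since stably rational varieties are R-trivial, $\mathrm{PSO}(Q_m)$ is not $K_m$-stably rational.

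Conclusion (3) is then immediate: $Q_m$ is anisotropic by iterated application of Remark~\ref{remark}, it lies in $I^{n+1}(K_m)$ by (1), the scalar $\lambda$ witnesses non-$R$-triviality by (2), and the Witt-ring identity above provides the required decomposition of $Q_m$ into generalized $(n{+}1)$-fold Pfister forms annihilated by $\langle 1,-\lambda\rangle$. The main obstacle is the inductive step for (2): one must verify the anisotropy hypotheses underlying Proposition~\ref{comp2} at every stage and handle the passage from $K_i^{\times 2}$ down to $K_{i-1}^{\times 2}$ carefully, so that the chain of containments genuinely transfers non-membership of $\lambda$ from level $i{-}1$ to level $i$ rather than merely back to the larger field.
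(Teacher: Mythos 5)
Your proof is correct and follows essentially the same route as the paper: the same telescoping Witt-ring identity $Q_m = \sum_i \langle \alpha_i, t_i\rangle \otimes p_{i,n}$ for conclusions (1) and (3), and the same induction on $i$ via the chain $\mathrm{Hyp}(Q_i)K_i^{\times 2} \subseteq \mathrm{Hyp}((Q_{i-1})_{K_i})K_i^{\times 2} \subseteq \mathrm{Hyp}(Q_{i-1})K_i^{\times 2}$ coming from Propositions~\ref{comp2} and~\ref{comp1}, together with the observation that $K_{i-1}^\times \cap K_i^{\times 2} = K_{i-1}^{\times 2}$ for Laurent series fields. You are somewhat more explicit than the paper about the anisotropy hypotheses needed at each stage and about the square-class injectivity used to transport $\lambda\notin\mathrm{Hyp}(Q_{i-1})K_{i-1}^{\times 2}$ up to $K_i$, but the substance is identical.
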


\begin{proof}

In the Witt ring $W\left(K_m\right)$, 

\begin{equation}
\label{equation1}Q_m = q_n + \sum_{i=1}^m t_i p_{i,n} = \sum_{i=1}^{m} \alpha_i p_{i,n} + t_i p_{i,n} = \sum_{i=1}^m  p_{i,n}\otimes \langle \alpha_i, t_i \rangle \in I^nI \subseteq I^{n+1}\left( K_m\right)\phantom{.}-
\end{equation}

We now prove by induction that $\lambda \in G\left(Q_i\right)\setminus \mathrm{Hyp}\left(Q_i\right){K_i}^{\times 2}$ for each $i\leq m$.

The base case $i=0$ is given, namely the pair $\left(k_n,q_n\right)$. Assume as induction hypothesis that this statement holds for all $i\leq j$. The proof of the statement for $i=j+1$ follows :  

The following notations are introduced for convenience.  
\begin{align*}
\left(Q,K\right)&:=\left(Q_{j+1},K_{j+1}\right) \\
\left(q,k\right)&:=\left(Q_{j},K_{j}\right) \\ 
t &:=t_{j+1}  \\
p &:= p_{j+1,n}\in P_n\left(k\right)
\end{align*}

Thus $Q=q+tp \in W\left(K\right)$. 

Since $\lambda\in k^{\times}$ and not in $\mathrm{Hyp}(q)k^{\times 2}$, it is not in $\mathrm{Hyp}(q)K^{\times 2}$. By Proposition \ref{comp1}, $\lambda\not\in \mathrm{Hyp}(q_K)K^{\times 2}$ and by Proposition \ref{comp2}, $\lambda \not\in\mathrm{Hyp}(Q)K^{\times 2}.$ By construction, $\lambda \in G(Q)$ as $\lambda \in G\left(p\right)\cap G(q)$. Hence $\lambda\in G(Q)\setminus \mathrm{Hyp}(Q)K^{\times 2}$.

It is clear that $\left(n+1,\lambda,K_m,Q_m\right)$ has property $\star$ by Equation (\ref{equation1}).
\end{proof}

\section{Conclusion}

\begin{theorem} For each $n$, there exists a quadratic form $q_n$ defined over a field $k_n$ such that $\mathrm{PSO}\left(q_n\right)$ is not $k_n$-stably rational. 
\end{theorem}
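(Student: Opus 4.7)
The plan is to prove this by induction on $n$, using Theorem \ref{recursive} as the inductive engine. All of the technical content is concentrated in (a) establishing a suitable base case satisfying property $\star$, and (b) iterating the recursive construction; the induction itself is mechanical.

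For the base case I would take $n_0 = 1$ and attempt to use Merkurjev's example from \cite{ME}: a six-dimensional form $q_1 \in I(k_1)$ over a field $k_1$ of cohomological dimension $2$ with $\mathrm{PSO}(q_1)(k_1)/R \neq \{1\}$. By Theorem \ref{Reqformula}, this automatically provides a scalar $\lambda \in G(q_1)\setminus \mathrm{Hyp}(q_1)k_1^{\times 2}$. To fit this into property $\star$, I need a decomposition $q_1 = \sum_i \alpha_i \langle 1, a_i \rangle$ in $W(k_1)$ such that $\langle 1, -\lambda \rangle$ annihilates each $\langle 1, a_i \rangle$, equivalently $\lambda \in G(\langle 1, a_i \rangle)$ for every $i$. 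Whether Merkurjev's specific construction admits such a decomposition for some suitable $\lambda$ requires inspection of his example; if it does not work out of the box, the fallback is to hand-craft a base field and form from scratch (for instance, over an iterated Laurent series base) so that residue-map computations in the Witt ring force both conditions simultaneously, using the same template that makes the inductive step go through.

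Once a base quadruple $(1, \lambda, k_1, q_1)$ with property $\star$ is secured, the inductive step is immediate: part (3) of Theorem \ref{recursive} upgrades property $\star$ from level $n$ to level $n+1$ while preserving the same $\lambda$. Iterating this from the base case yields, for every $n \geq 1$, a quadruple $(n, \lambda, k_n, q_n)$ with property $\star$, and part (2) of Theorem \ref{recursive} then gives $\lambda \in G(q_n) \setminus \mathrm{Hyp}(q_n)k_n^{\times 2}$. Combined with Theorem \ref{Reqformula}, this forces $\mathrm{PSO}(q_n)(k_n)/R \neq \{1\}$, so $\mathrm{PSO}(q_n)$ is not $k_n$-stably rational. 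The case $n = 0$ is trivial since $I^0 = W$ and one may reuse Merkurjev's example directly.

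The main obstacle is clearly the base case. The two required properties — non-triviality of $G(q_1)/\mathrm{Hyp}(q_1)k_1^{\times 2}$ at a specific witness $\lambda$, and a Pfister decomposition of $q_1$ with every slot lying in the similarity group of $\langle 1, -\lambda \rangle$ — have to hold simultaneously for the \emph{same} $\lambda$, which is a tighter constraint than either condition alone. Once that compatibility is engineered, the rest of the proof is automatic from the recursive machinery built in the previous section.
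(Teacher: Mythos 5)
Your overall strategy matches the paper's: establish a base quadruple with property $\star$ at $n=1$ and iterate Theorem \ref{recursive}. Where you go astray is in treating the base case as a delicate compatibility problem that might require inspection of Merkurjev's construction or a hand-crafted replacement. The constraint you worry about — that the \emph{same} $\lambda$ must simultaneously witness $G(q_1)/\mathrm{Hyp}(q_1)k_1^{\times 2} \neq \{1\}$ and admit a Pfister decomposition of $q_1$ with every slot annihilated by $\langle 1,-\lambda\rangle$ — is not actually a constraint at all. The point is that $\lambda \in G(q_1)$ is \emph{equivalent} to $\langle 1,-\lambda\rangle\otimes q_1 = 0$ in $W(k_1)$, and there is a general structure theorem (Scharlau, Chap.\ 2, Thm.\ 10.13, cited in the paper) asserting that any form annihilated by a binary Pfister form $\langle 1,-\lambda\rangle$ decomposes in the Witt ring as a sum of scaled binary forms $\alpha_i\langle 1,a_i\rangle$, each itself annihilated by $\langle 1,-\lambda\rangle$. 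So the decomposition comes for free from the mere fact that $\lambda$ is a similarity factor, with no need to look inside Merkurjev's example or build a new one; any $\lambda\in G(q_1)\setminus\mathrm{Hyp}(q_1)k_1^{\times 2}$ automatically satisfies property $\star$ at level $1$. Once you plug in that theorem, your induction closes exactly as the paper's does.

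One small further remark: the paper starts Merkurjev's form over a base $F$ and passes to the extension $E/F$ (from Merkurjev's Thm 3) where $\mathrm{PSO}(q)(E)/R\neq\{1\}$, then takes $k_1=E$, $q_1=q_E$; you compress this by saying Merkurjev gives a field of cohomological dimension $2$ directly. That is fine in spirit, but the clean statement to cite is that for a six-dimensional form with nontrivial discriminant and $C_0(q)$ division, some extension $E$ of the ground field has $\mathrm{PSO}(q)(E)/R\neq\{1\}$; one then works over $E$.
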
 

\begin{proof}
Let $q$ be an anisotropic quadratic form of dimension $6$ over a field $F$ of characteristic 0. If the discriminant of $q$ is not trivial and $C_{0}(q)$ is a division algebra, then there exists a field extension $E$ of $F$ such that $\mathrm{PSO}(q)(E)/R \neq \{1\}$ (\cite {ME}, Thm 3). 

Define $k_1 := E$, $q_1 : = q_E$ and pick a $\lambda\in G\left(q_1\right)\setminus \mathrm{Hyp}\left(q_1\right)k_1^{\times 2}$.

We can write $q_1 = \sum_{i=1}^{r}\alpha_if_i$ in the Witt ring $W\left(k_1\right)$ for some scalars $\alpha_i\in k_1^{\times}$ and $1$-fold Pfister forms $f_i$ which are annihilated by $\langle 1, -\lambda \rangle$(\cite{SC}, Chap 2, Thm 10.13).

Therefore Theorem \ref{recursive} can be applied repeatedly to produce pairs $\left(k_n,q_n\right)$ such that  \[\mathrm{PSO}\left(q_n\right)\left(k_n\right)/R\neq \{1\}.\] 
This implies that $\mathrm{PSO}\left(q_n\right)$ is not $k_n$-stably rational.
\end{proof}

\textit{Acknowledgements} : The author thanks Professors P. Gille, A.S Merkurjev and R. Parimala for their valuable suggestions and critical comments.


\begin{thebibliography}{9999}
 \bibitem[1]{CF} J.W.S. Cassels and A. Fr\"ohlich, Algebraic Number Theory, Academic Press (1967).
\bibitem[2]{CH} V.I. Chernousov, The group of similarity ratios of a canonical quadratic form, and the stable rationality of the variety PSO, \textit{Math. Zametki} \textbf{55}, No. 4 (1994), pgs 114-119; \textit{Math. Notes }\textbf{55}, Nos. 3-4 (1994) : pgs 413-416. 
\bibitem[3]{GI} P. Gille, Examples of non-rational varieties of adjoint groups, \textit{Journal of Algebra} \textbf{193} (1997) : pgs 728-747.
\bibitem[4]{BS} B. Kahn and R. Sujatha, Unramified cohomology of quadrics, II, \textit{Duke Mathematical Journal} \textbf{106(3)} (2001) : pgs 449-484.
\bibitem[5]{KLST} M.-A. Knus, T.Y. Lam, D.B. Schapiro, and J.-P. Tignol, Discriminants of involutions on biquaternion algebras, \textit{Proc. Sympos. Pure Math.} \textbf{58} (1995) : pgs 279-303.
\bibitem[6]{ME} A.S. Merkurjev, R-equivalence and rationality problem for semisimple adjoint classical groups, \textit{Publications Math\'ematiques de l'IH\'ES} \textbf{84} (1996): pgs 189-213.
\bibitem[7]{PLR} V.P. Platonov and A.S. Rapinchuk, Algebraic Groups and Number Theory, Academic Press Inc. (1994).
\bibitem[8]{SE} J.-P. Serre, Local Fields, Springer-Verlag (1979).
\bibitem[9]{SC} W. Scharlau, Quadratic and Hermitian Forms, Springer-Verlag (1985).
\end{thebibliography}
\end{document}